\def\ps@pprintTitle{%
 \let\@oddhead\@empty
 \let\@evenhead\@empty
 \def\@oddfoot{\hfill\today}%
 \let\@evenfoot\@oddfoot}
\providecommand{\doi}[1]{%
  \begingroup
    \let\bibinfo\@secondoftwo
    \urlstyle{rm}%
    \href{http://dx.doi.org/#1}{%
      doi:\discretionary{}{}{}%
      \nolinkurl{#1}%
    }%
  \endgroup
}
\algnewcommand\algorithmicinput{\textbf{Input:}}
\algnewcommand\Input{\item[\algorithmicinput]}
\algnewcommand\algorithmicoutput{\textbf{Output:}}
\algnewcommand\Output{\item[\algorithmicoutput]}
\newtheorem{thm}{Theorem}
\newtheorem{lemma}[thm]{Lemma}
\newtheorem{proposition}[thm]{Proposition}
\theoremstyle{definition}
\newtheorem{expl}{Example}
\newtheorem{remark}{Remark}
\newcommand{\Z}{\mathbb{Z}}
\newcommand{\Q}{\mathbb{Q}}
\newcommand{\C}{\mathbb{C}}
\newcommand{\ann}{\textrm{Ann}}
\newcommand{\Span}[1]{\mathrm{Span}_{\Q}\{{#1}\}}
\newcommand{\Spanc}[1]{\mathrm{Span}_{\C}\{{#1}\}}
\newcommand{\diag}{\mathrm{diag}}
\begin{document}

\begin{frontmatter}
  \title{Fast Algorithms for Computing Eigenvectors of Matrices via
    Pseudo Annihilating Polynomials\tnoteref{t1}} 
  \author[niigata]{Shinichi Tajima}
  \ead{tajima@emeritus.niigata-u.ac.jp}

  \author[kanazawa]{Katsuyoshi Ohara}
  \ead{ohara@se.kanazawa-u.ac.jp}
  \ead[url]{http://air.s.kanazawa-u.ac.jp/~ohara/}

  \author[tsukuba]{Akira Terui\corref{corauthor}}
  \cortext[corauthor]{Corresponding author}
  \ead{terui@math.tsukuba.ac.jp}
  \ead[url]{https://researchmap.jp/aterui}

  \address[niigata]{Graduate School of Science and Technology, Niigata
    University, Niigata 950-2181, Japan}
  \address[kanazawa]{Faculty of Mathematics and Physics, Kanazawa
    University, Kanazawa 920-1192, Japan}
  \address[tsukuba]{Faculty of Pure and Applied Sciences, University
    of Tsukuba, Tsukuba 305-8571, Japan}

  \tnotetext[t1]{This work has been partly supported by JSPS KAKENHI
    Grant Numbers JP15KT0102, JP18K03320, JP16K05035, and by the
    Research Institute for Mathematical Sciences, a Joint
    Usage/Research Center located in Kyoto University.}

  \begin{abstract}
    An efficient algorithm for computing eigenvectors of a matrix of
    integers by exact computation is proposed.  The components of
    calculated eigenvectors are expressed as polynomials in the
    eigenvalue to which the eigenvector is associated, as a
    variable.  The algorithm, in principle, utilizes the minimal
    annihilating polynomials for eliminating redundant
    calculations. Furthermore, in the actual computation, the
    algorithm computes candidates of eigenvectors by utilizing
    \emph{pseudo} annihilating polynomials and verifies their
    correctness. The experimental results show that our algorithms
    have better performance compared to conventional methods.
\end{abstract}

  \begin{keyword}
    Eigenvectors \sep Pseudo annililating polynomial \sep Krylov vector space

    \MSC[2010] 15A18 \sep 65F15 \sep 68W30
  \end{keyword}
\end{frontmatter}

\newpage
\section{Introduction}
\label{sec:intro}

Exact linear algebra plays important roles in many fields of
mathematics and sciences. In recent years, this area has been
extensively studied and new algorithms have been
proposed for various types of computations,
such as computing canonical forms of matrices
(\cite{aug-cam1997},
\cite{dum-sau-vil2001},
\cite{mor2004},
\cite{per-ste2010},
\cite{sau-zhe2004},
\cite{sto2001},
\cite{sto-lab1996}),
the
characteristic or the minimal polynomial of a matrix
(\cite{dum-per-wan2005}, \cite{neu-pra2008}), LU and other
decompositions and/or solving a system of linear equations
(\cite{bos-jea-sch2008},
\cite{ebe-gie-gio-sto-vil2006},
\cite{jea-per-sto2013},
\cite{may-sau-wan2007},
\cite{sau-woo-you2011}),
 and several software have been developed
(\cite{alb2012},
\cite{che-sto2005},
\cite{linbox2002},
\cite{dum-gau-per2002},
\cite{dum2004}).


We have proposed, in the context of symbolic computation, a series of
algorithms on eigenproblems
including computation of (generalized) eigendecomposition and
spectral decomposition (\cite{oha-taj2009}).  In this paper, we propose
an effective method for computing eigenvectors of matrices of
integers or rational numbers.

Let $\lambda$ be an eigenvalue of a
matrix. In a conventional method of computing eigenvectors, the
eigenvector associated to $\lambda$ is
simply computed by solving a system of linear equations.
However, the method has a drawback that, if $\lambda$ is an
algebraic number, it uses solving a system of linear
equations with algebraic number arithmetic for computing the
eigenvector, which is inefficient.

In the proposed method, the components of eigenvectors are expressed
as polynomials in eigenvalues to which the eigenvector is
associated, as a variable.  Furthermore, in the case that $\lambda$
is an algebraic number and the geometric multiplicity of
$\lambda$ is equal to its algebraic multiplicity, it is sufficient to
compute just 
the algebraic multiplicity of $\lambda$ of eigenvectors for expressing
all the eigenvectors associated to \emph{all} the conjugates of
$\lambda$. A method for computing eigenvectors in this form has been
proposed by Takeshima and Yokoyama (\cite{tak-yok1990}) in the 1990s
by using the 
Frobenius normal form of $A$, and it has been extended by Moritsugu
and Kuriyama (\cite{mor-kur2001}) for the case that the Frobenius normal
form has multiple companion blocks and for computing generalized
eigenvectors.  In contrast, our approach is based on the concept of
the minimal annihilating polynomials (\cite{taj-oha-ter2018}) and the
Krylov vector spaces. We show that, with the
use of minimal annihilating polynomials, eigenvectors are computed
in an effective manner without solving a system of linear equations. 
Furthermore, the proposed method does not require computation of
canonical form of matrices.

We propose algorithms for computing eigenvectors under the
assumption that the geometric multiplicity of the eigenvalue is equal
to its algebraic multiplicity.  The resulting
algorithms have following features.  First, \emph{pseudo} minimal annihilating
polynomials are used for faster computation of eigenvectors. Second,
computation of a candidate of eigenvector is 
completed almost simultaneously as verification of pseudo annihilating
polynomial. Notably, the Horner's rule for matrices and vectors
is used in an effective manner for fast evaluations.

This paper is organized as follows.  In Section~\ref{sec:prem}, we
recall the notion of minimal annihilating polynomial and other
necessary concepts.  In 
Section~\ref{sec:eigenvect-multiplicity=1}, we describe a main idea of
an algorithm for 
computing eigenvectors just for the case that the algebraic
multiplicity of the 
eigenvalue is equal to $1$. In
Section~\ref{sec:sec:eigenvect-multiplicity>1}, we give, by using
Krylov vector spaces, an algorithm
for computing eigenvectors in the case that the algebraic multiplicity of
the eigenvalue is greater than $1$. In
Section~\ref{sec:eigenvector-upap}, we introduce the concept of 
pseudo annihilating polynomial and present algorithms for computing
eigenvectors using the pseudo annihilating polynomials.
In Section~\ref{sec:exp},
experimental results for the proposed algorithms are shown.

\section{Preliminaries}
\label{sec:prem}

Let $A$ be a $n\times n$ matrix over rational numbers, $\chi_A(\lambda)$
the characteristic polynomial of $A$, and $E$ the identity matrix
of dimension $n$. Assume that the 
irreducible factorization 
\begin{equation}
  \label{eq:charpol}
  \chi_A(\lambda)=f_1(\lambda)^{m_1}f_2(\lambda)^{m_2}\cdots
  f_q(\lambda)^{m_q}
\end{equation}
of $\chi_A(\lambda)$ is given, where $f_p(\lambda)\in\Q[\lambda]$,
$p=1,2,\ldots,q$. 


\subsection{The minimal annihilating polynomial}
\label{sec:map}

Let $\bm{v}$ be a non-zero vector in $\Q^n$. The monic generator of an
ideal $\ann_{\Q[\lambda]}(A,\bm{v})$ defined to be
\begin{equation}
  \label{eq:annideal}
  \ann_{\Q[\lambda]}(A,\bm{v}) = \{P(\lambda)\in \Q[\lambda]\mid
  P(A)\bm{v}=\bm{0}\},
\end{equation}
is called
\emph{the minimal annihilating polynomial} of $\bm{v}$ with respect to
$A$.
For $j\in J:=\{1,2,\ldots,n\}$, let
$\bm{e}_j={}^t(0,\ldots,0,1,0,\ldots,0)$ be the $n$ dimensional
standard unit vector and let $\pi_{A,j}(\lambda)$ denote the 
minimal annihilating polynomial of $\bm{e}_j$ with respect to $A$. 

Let
\begin{equation}
  \label{eq:map}
  \begin{split}
    \pi_{A,j}(\lambda)
    &=f_1(\lambda)^{l_{1,j}}f_2(\lambda)^{l_{2,j}}\cdots
    f_p(\lambda)^{l_{p,j}}\cdots f_q(\lambda)^{l_{q,j}},
    \\
    &\qquad 0\le l_{p,j}\le m_p,\quad j\in J,
  \end{split}
\end{equation}
be the irreducible factorization of $\pi_{A,j}(\lambda)$.

Let $g_{p,j}(\lambda)$ denote the cofactor in $\pi_{A,j}(\lambda)$ of
the eigenfactor $f_p(\lambda)$ defined to be
\begin{equation}
  \label{eq:gpj}
  g_{p,j}(\lambda)
  =f_1(\lambda)^{l_{1,j}}\cdots f_{p-1}(\lambda)^{l_{p-1,j}}
  f_{p+1}(\lambda)^{l_{p+1,j}}\cdots f_q(\lambda)^{l_{q,j}}.
\end{equation}

\subsection{Horner's rule for matrix polynomials}
\label{sec:horner}

Let $f(\lambda)$ be a polynomial in $\Q[\lambda]$ of degree $d$:
\begin{equation}
  \label{eq:f}
  f(\lambda)=a_d\lambda^d+a_{d-1}\lambda^{d-1}+\cdots+a_0\lambda^0,
\end{equation}
with $a_d\ne 0$. Define $\psi_f(x,y)$ as
\begin{equation}
  \label{eq:psi-p}
  \psi_f(x,y)=\frac{f(x)-f(y)}{x-y}.
\end{equation}
Since, $\psi_f(x,y)$ is the quotient of $f(x)$ on division by $x-y$,
the coefficients $c_i\in\Q[x]$, $i=d-1,d-2,\ldots,0$, of the expansion
\begin{equation}
  \label{eq:psi-f}
  \psi_f(x,y)=c_{d-1} y^{d-1}+c_1y^{d-2}+\cdots+c_{1}y+c_{0},
\end{equation}
of $\psi(x,y)$ with respect to $y$ satisfy the following recursion
relations:
\begin{equation}
  \label{eq:psi-horner}
  c_{d-1}=a_d,\quad c_{d-1-j}=c_{d-j}x+a_{d-j} \quad (j=1,\ldots,d-1).
\end{equation}

Let $\bm{v}\in\Q^n$. Then, the vector $f(A)\bm{v}$ and the coefficient
vectors $\bm{c}_i$, $i=d-1,d-2,\ldots,0$ are calculated by the
Horner's rule with multiplication of $\bm{v}$ from the right as
\begin{equation}
  \label{eq:fav}
  \begin{split}
    f(A)\bm{v} &= (a_dA^d+a_{d-1}A^{d-1}+\cdots+a_0E)\bm{v} \\
    &= A(\cdots A(A(a_n(A\bm{v}) +
    a_{d-1}\bm{v})+a_{d-2}\bm{v})\cdots)+a_0\bm{v}, \\
    \psi_f(A,\lambda E)\bm{v} & =\lambda^{d-1} \bm{c}_{d-1}+\lambda^{d-2}
    \bm{c}_{d-2}+\cdots+\lambda \bm{c}_{1}+\bm{c}_{0}, \\ 
    \bm{c}_{d-1}&=a_d\bm{v},\quad
    \bm{c}_{d-1-j}=A\bm{c}_{d-j}+a_{d-j}\bm{v} \quad (j=1,\ldots,d-1), 
  \end{split}
\end{equation}
respectively. Thus, total cost is bounded by $O(n^2)$ and
$O(n^2(d-1))$, respectively.

Notice that, $f(A)\bm{v}=A\bm{c}_0+a_0\bm{v}$ holds. This relation
will be used in Section~\ref{sec:eigenvector-upap}.

\begin{lemma}
  \label{lem:eigenvector-single}
  Let $\bm{u}\in\Q^n$ be a non-zero vector and let $f(\lambda)$ be the
  minimal annihilating polynomial of $\bm{u}$ with respect to $A$. Let
  $\varphi(\lambda)=\psi_f(A,\lambda E)\bm{u}$. Let $\alpha$ be a root
  of $f(\lambda)$. Then, $\varphi(\alpha)$ is an eigenvector of $A$
  associated to the eigenvalue $\alpha$.
\end{lemma}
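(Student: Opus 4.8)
The plan is to verify two things: that $(A - \alpha E)\varphi(\alpha) = \bm{0}$, so that $\varphi(\alpha)$ lies in the eigenspace for $\alpha$, and that $\varphi(\alpha) \neq \bm{0}$, so that it is a genuine eigenvector. The engine of the whole argument is the defining identity of $\psi_f$, namely $f(x) - f(y) = (x-y)\,\psi_f(x,y)$, which holds as a polynomial identity in the two variables $x$ and $y$.

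For the first part, I would substitute $x \mapsto A$ and $y \mapsto \alpha E$ into that identity. Since $\alpha E$ is a scalar matrix it commutes with $A$, so the substitution into a polynomial identity in two variables is legitimate and yields $f(A) - f(\alpha)E = (A - \alpha E)\,\psi_f(A, \alpha E)$. Because $\alpha$ is a root of $f$ we have $f(\alpha) = 0$, so $f(A) = (A - \alpha E)\,\psi_f(A, \alpha E)$. Applying both sides to $\bm{u}$ and using that $f$ is the minimal annihilating polynomial of $\bm{u}$, hence $f(A)\bm{u} = \bm{0}$, gives $(A - \alpha E)\varphi(\alpha) = \bm{0}$, which is exactly the eigenvalue equation $A\varphi(\alpha) = \alpha\varphi(\alpha)$.

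For the second part I would exploit minimality. Write $d = \deg f$. Since $f(\alpha) = 0$, the one-variable specialization is $\psi_f(x, \alpha) = (f(x) - f(\alpha))/(x - \alpha) = f(x)/(x - \alpha)$, a \emph{monic} polynomial of degree $d - 1$. Hence $\varphi(\alpha) = \psi_f(A, \alpha E)\bm{u}$ expands as $A^{d-1}\bm{u}$ plus a combination of $\bm{u}, A\bm{u}, \ldots, A^{d-2}\bm{u}$. That $f$ is the minimal annihilating polynomial and has degree $d$ means precisely that $\bm{u}, A\bm{u}, \ldots, A^{d-1}\bm{u}$ are linearly independent, so the leading coefficient $1$ on $A^{d-1}\bm{u}$ forces $\varphi(\alpha) \neq \bm{0}$.

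The one genuine obstacle is that $\alpha$ need not lie in $\Q$, so the coefficients of $\psi_f(x,\alpha)$ live in $\Q(\alpha)$ and the final linear-independence step must be taken over that extension field rather than over $\Q$. This turns out to be harmless: the vectors $\bm{u}, A\bm{u}, \ldots, A^{d-1}\bm{u}$ have rational entries, and the rank of the matrix they form is unchanged under field extension, so their independence over $\Q$ (which is what minimality of $f$ provides) upgrades to independence over $\Q(\alpha)$. With that observation the nonvanishing argument goes through verbatim, completing the proof.
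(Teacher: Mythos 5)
Your proposal is correct and follows essentially the same route as the paper: both establish $(A-\alpha E)\varphi(\alpha)=\bm{0}$ from the identity $(x-y)\psi_f(x,y)=f(x)-f(y)$ together with $f(A)\bm{u}=\bm{0}$ and $f(\alpha)=0$, and both derive $\varphi(\alpha)\ne\bm{0}$ from minimality of $f$ combined with $\deg\psi_f<\deg f$. Your treatment of the nonvanishing step is simply more explicit than the paper's one-line version, spelling out the monic leading term $A^{d-1}\bm{u}$ and the (harmless) passage from $\Q$ to $\Q(\alpha)$, which is a welcome clarification rather than a different argument.
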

\begin{proof}
  It follows immediately from $(x-y)\psi_f(x,y)=f(x)-f(y)$ that
  \[
  (A-\lambda E)\varphi(\lambda)=(f(A)-f(\lambda E))\bm{u}
  =-f(\lambda)\bm{u}.
  \]
  Therefore, 
  $(A-\alpha E)\varphi(\alpha)=\bm{0}$ holds.
  Since $f$ is the minimal annihilating polynomial and
  $\deg(\psi_f(x,y))<\deg(f(\lambda))$, we have
  $\varphi(\alpha)\ne\bm{0}$. This completes the proof. 
\end{proof}

Since $f$ is a factor of the characteristic polynomial
$\chi_A(\lambda)$, we call $\varphi(\lambda)$, an eigenvector
associated to the eigenfactor $f$, for simplicity.

Let us emphasize the fact that the eigenvector $\varphi(\lambda)$
introduced above represents all the eigenvectors
$\varphi(\alpha_1),\varphi(\alpha_2),\ldots,\varphi(\alpha_d)$
associated to the eigenvalues $\alpha_1,\alpha_2,\ldots,\alpha_d$ of
$A$ satisfying $f(\lambda)=0$.

\section{Main ideas}
\label{sec:eigenvect-multiplicity=1}

In this section, we show basic ideas of our approach for computing
eigenvectors. For this aim, we consider the simplest case where
algebraic multiplicity $m_p$ of an eigenfactor $f_p$ is equal to
one. We present a prototype of our method for illustration.

Assume that $m_p=1$ and all the minimal
annihilating polynomials \linebreak
$\pi_{A,1}(\lambda),\pi_{A,2}(\lambda),\ldots,\pi_{A,n}(\lambda)$ are
given.

Let 
\begin{equation}
  \label{eq:j1}
  J_0=\{j\in J\mid l_{p,j}=0\}, \quad J_1=\{j\in J\mid l_{p,j}=1\}.  
\end{equation}
Then, for $j\in J_1$, we have
$\pi_{A,j}(\lambda)=f_p(\lambda)g_{p,j}(\lambda)$, where
$g_{p,j}(\lambda)$ is the cofactor in $\pi_{A,j}$ of the eigenfactor
$f_p(\lambda)$. Now consider the vector
\begin{equation}
  \label{eq:vpj}
  \bm{v}_{p,j}=g_{p,j}(A)\bm{e}_j,
\end{equation}
for $j\in J_1$. Then, since
$f_p(\lambda)$ is the minimal annihilating polynomial of the non-zero
vector $\bm{v}_{p,j}$, $\varphi_j(\lambda)$ defined to be
\begin{equation}
  \label{eq:phij}
  \varphi_j(\lambda)=\psi_p(A,\lambda E)\bm{v}_{p,j},
\end{equation}
is an eigenvector associate with
the eigenfactor $f_p(\lambda)$, where $\psi_p(x,y)=\psi_{f_p}(x,y)$. 

The argument above leads a prototype for computing eigenvectors as
follows. 

\noindent{\hrulefill}
  \begin{algorithmic}[1]
    \Input{
      $A\in\Q^{n\times n}$; $f_p(\lambda)\in\Q[\lambda]$: an eigenfactor of
      $A$ with $m_p=1$; $J_1\subset J$; $\{g_{p,j}(\lambda)\mid j\in
      J_1\}$: a set of cofactors, defined as in \cref{eq:gpj};
    }
    \Output{
      $\varphi(\lambda)$: an eigenvector of $A$ associated to the
      root of $f_p(\lambda)=0$;
    }
    \State Select $j\in J_1$;
    \State $\bm{v}_{p,j}\gets g_{p,j}(A)\bm{e}_j$ with the
    Horner's rule (\cref{eq:fav}); \label{alg:eigenvector:gjp}
    \State $\varphi(\lambda)\gets\psi_p(A,\lambda E)\; \bm{v}_{p,j}$
    with the Horner's rule (\cref{eq:psi-horner}); \label{alg:eigenvector:rho}
    \State \Return $\varphi(\lambda)$;
  \end{algorithmic}
  \hrulefill

\begin{expl}
  \label{expl:eigenvector}
  Let
  \[
  A=
  \begin{pmatrix}
    -3 & -3 & -4 & 2 & 1 \\
    -114 & 56 & 12 & 6 & -3 \\
    330 & -179 & -50 & -11 & 12 \\
    423 & -255 & -88 & -4 & 22 \\
    -303 & 3 & -79 & 60 & 5 
  \end{pmatrix}
  .
  \]
  The characteristic polynomial $\chi_A(\lambda)$ and the unit
  minimal annihilating polynomial $\pi_{A,j}(\lambda)$,
  $j=1,2,\ldots,5$ are
  \begin{align*}
    \chi_A(\lambda) &= f_1(\lambda)f_2(\lambda),
    \\
    \pi_{A,3}(\lambda) &= f_2(\lambda),
    \pi_{A,1}(\lambda)=\pi_{A,2}(\lambda)=\pi_{A,4}(\lambda)=\pi_{A,5}(\lambda)
    =f_1(\lambda)f_2(\lambda), 
  \end{align*}
  where $f_1(\lambda)=\lambda^2+\lambda+12$,
  $f_2(\lambda)=\lambda^3-5\lambda^2-60\lambda-41$. 
  Let us compute the eigenvector $\varphi(\lambda)$ associated to
  the eigenfactor $f_2(\lambda)$, by using
  $\psi_2(x,y)=y^2+(x-5)y+x^2-5x-60$. Since $J_1=\{1,2,3,4,5\}$, any 
  vector from
  $\bm{v}_{2,1},\bm{v}_{2,2},\bm{v}_{2,3},\bm{v}_{2,4},\bm{v}_{2,5}$
  can be used. Here we take, for instance, two cases:

  \begin{enumerate}
  \item Computing $\varphi(\lambda)$ using $\bm{v}_{2,3}$: since
    $g_{2,3}(\lambda)=1$, 
    $\bm{v}_{2,3}=\bm{e}_3$. The eigenvector
    $\varphi(\lambda)$ is computed as
    \begin{align}
      \varphi(\lambda) &= \psi_2(A,\lambda E)\bm{e}_3=
      \{\lambda^2 E + \lambda(A-5E) + (A^2-5A-60E)\}\bm{e}_3
      \nonumber\\
      &=
      \lambda^2\bm{e}_3+\lambda(A\bm{e}_3-5\bm{e}_3)
      +(A(A\bm{e}_3-5\bm{e}_3)-60\bm{e}_3)
      \nonumber\\
      &=
      {}^t(0,0,1,0,0)\lambda^2+{}^t(-4,12,-55,-88,-79)\lambda
      \nonumber\\
      &\qquad +{}^t(-59,177,-758,-1298,-82)
      .
      \label{eq:ex1-v1}
    \end{align}
  \item Computing $\varphi(\lambda)$ using $\bm{v}_{2,1}$: since
    $g_{2,1}(\lambda)=f_1(\lambda)$, 
    \begin{equation}
      \label{eq:ex1-v21}
      \bm{v}_{2,1}=f_1(A)\bm{e}_1={}^t(-417,1251,-5043,-9174,-1941).      
    \end{equation}
    The eigenvector $\varphi(\lambda)$ is computed as
    \begin{align}
      \label{eq:ex1-v2}
      \varphi(\lambda) &= \psi_2(A,\lambda E)\bm{v}_{2,1} 
      =\{\lambda^2 E + \lambda(A-5E) + (A^2-5A-60E)\}\bm{v}_{2,1}
      \nonumber\\
      &=
      \lambda^2\bm{v}_{2,1}+\lambda(A\bm{v}_{2,1}-5\bm{v}_{2,1})
      +(A(A\bm{v}_{2,1}-5\bm{v}_{2,1})-60\bm{v}_{2,1})
      \nonumber\\
      &=
      {}^t(-417,1251,-5043,-9174,-1941)\lambda^2 \nonumber\\
      &\qquad +{}^t(-534,1602,-6552,-11748,-21939)\lambda
      \nonumber\\
      &\qquad +{}^t(2589,-7767,33162,56958,-13899).
    \end{align}
    Now consider the Krylov vector space
    $L_A(\bm{v}_{2,1})=\Span{\bm{v}_{2,1},A\bm{v}_{2,1},A^2\bm{v}_{2,1}}$
    generated by $\bm{v}_{2,1}$. From
    \begin{align*}
      \bm{v}_{2,1} &= {}^t(-417,1251,-5043,-9174,-1941), \\
      A\bm{v}_{2,1} &= {}^t(-2619,7857,-31767,-57618,-31644), \\
      A^2\bm{v}_{2,1} &= {}^t(-35526,106578,-428253,-781572,-288579),
    \end{align*}
    we have
    \[
    L_A(\bm{v}_{2,1})= \Span{{}^t(1,-3,0,22,0), \bm{e}_3, \bm{e}_5}.
    \]
    Therefore, the vector 
    \begin{multline}
      \label{eq:ex1-v22}
      \psi_2(A,\lambda E)\bm{e}_5=
      {}^t(0,0,0,0,1)\lambda^2
      +
      {}^t(1,-3,12,22,0)\lambda
      \\
      +
      {}^t(2,-6,25,44,0),
    \end{multline}
    constructed from the last basis vector $\bm{e}_5={}^t(0,0,0,0,1)$
    in $V=L_A(\bm{v}_{2,1})$ 
    is also an eigenvector associated to the eigenfactor
    $f_2(\lambda)$. 
  \end{enumerate}
  Notice that $\psi_2(A,\lambda E)\bm{e}_3$, $\psi_2(A,\lambda
  E)\bm{e}_5$ have simpler expression than $\psi_2(A,\lambda
  E)\bm{v}_{2,1}$. Notice also that, in all cases, the leading
  coefficient vector in $\varphi(\lambda)=\psi_2(A,\lambda E)\bm{u}$
  is equal to $\bm{u}$ itself.
  Note also that, if we consider
  $V=\Span{\bm{v}_{2,1},\bm{v}_{2,2},\ldots,\bm{v}_{2,5}}$, we also
  have
  \[
  V=\Span{{}^t(1,-3,0,22,0), \bm{e}_3,\bm{e}_5}.
  \]
\end{expl}

Let us turn back to the case $m_p=1$. Let
\[
V=\Span{\bm{v}_{p,j}\mid
  \bm{v}_{p,j}=g_{p,j}(A)\bm{e}_j,\ j\in J_1}.
\]
Then,
\[
\varphi(\lambda)=\psi_p(A,\lambda E)\bm{u},
\]
constructed from any non-zero vector $\bm{u}$ in $V$ gives rise to an
eigenvector associated to the eigenfactor $f_p(\lambda)$. In other
words, every non-zero vector $\bm{u}\in V$ has the same amount of
information on eigenspaces associated to the eigenfactor
$f_p(\lambda)$. In fact, if we consider the Krylov vector space
$L_A(\bm{u})$ defined to be
\begin{equation}
  \label{eq:lau}
  L_A(\bm{u})=\Span{\bm{u},A\bm{u},A^2\bm{u},\ldots,A^{d_p-1}\bm{u}},
\end{equation}
for $\bm{u}\in V$, with $d_p=\deg(f_p)$, we have $V=L_A(\bm{u})$.

This observation above yields the following two different strategies
for computing eigenvectors for the case $m_p=1$.
\begin{enumerate}[label=(\alph{enumi})]
\item The prototype method presented above requires
  $O(n^2\deg(\pi_{A,j}))$ operations for computing
  $\varphi(\lambda)$.  This suggests that, if one wants to obtain the
  eigenvector as quickly as possible, one should select the unit
  minimal annihilating polynomial $\pi_{A,j}(\lambda)$ of 1) smaller
  degree, or 2) if there are several ones of the same degree, select
  one with coefficients of smaller magnitudes, to reduce the amount of
  computation.
\item Recall the fact that the leading coefficient vector in
  $\psi_p(A,\lambda E)\bm{u}$ is equal to $\bm{u}$ itself. Therefore,
  if one wants to obtain the eigenvector which has a simple expression,
  it might be better to select a non-zero vector $\bm{u}$ from $V$. We
  arrive at the following strategy:
  \begin{enumerate}[label=(\roman{enumii})]
  \item select $j$ from $J_1$ as in (a) above;
  \item compute a basis $B$ of the Krylov vector space $L_A(\bm{u})$
    by column reductions;
  \item select a basis vector $\bm{u}$ from $B$ that has a simple form,
    or choose an appropriate one;
  \item compute the eigenvector $\varphi(\lambda)=\psi_p(A,\lambda
    E)\bm{u}$. 
  \end{enumerate}
\end{enumerate}

\section{Krylov vector space}
\label{sec:sec:eigenvect-multiplicity>1}

Let $f_p(\lambda)\in\Q[\lambda]$ be an eigenfactor of $A$, which
satisfies the condition $\max_{j\in J}\{l_{p,j}\}=1$. We give an
algorithm for computing the eigenvectors for all the roots
$\alpha_1,\alpha_2,\ldots,\alpha_d$ of $f_p(\lambda)$, where $d=d_p$
stands for the degree of $f_p$.

For $i=1,\ldots,d$, let $F_{p,\alpha_i}$ be the eigenspace associated
to the eigenvalue $\alpha_i$, and $F_p$ be the eigenspace associated
to the roots of $f_p(\lambda)=0$. Since the condition $\max_{j\in
  J}\{l_{p,j}\}=1$ implies $\dim(F_{p,\alpha_i})=m_p$, we have
$\dim_{\C}(F_p)=d_pm_p=d m_p$. The purpose of this section
is therefore to describe a method for computing $dm_p$ eigenvectors that
constitute a basis of the eigenspace $F_p$ associated to the
eigenfactor $f_p(\lambda)$.

Let $V=\Span{\bm{v}_{p,j}\mid j\in J_1}$, where $\bm{v}_{p,j}$ are
defined as in \cref{eq:vpj}.
For
$\bm{v}\in V$, let $L_A(\bm{v})$ be as in \cref{eq:lau}.
We have the following proposition.
\begin{proposition}
  \label{pro:eigenvector1}
  Let $\bm{u},\bm{w}\in V$ with $\bm{u},\bm{w}\ne\bm{0}$, and
  $\alpha_1,\ldots,\alpha_d$ be the roots of $f_p(\lambda)=0$. Then, the
  followings are equivalent:
  \begin{enumerate}[label={\upshape (\roman{enumi})}]
  \item $\Spanc{\psi_p(A,\alpha_i)\bm{u}\mid i=1,\ldots,d}
    =\Spanc{\psi_p(A,\alpha_i)\bm{w}\mid i=1,\ldots,d}$,
    \label{pro:eigenvector1:item1}
  \item $L_A(\bm{u})=L_A(\bm{w})$,\label{pro:eigenvector1:item2}
  \item $\bm{w}\in L_A(\bm{u})$,\label{pro:eigenvector1:item3}
  \item $\bm{u}\in L_A(\bm{w})$.\label{pro:eigenvector1:item4}
  \end{enumerate}
\end{proposition}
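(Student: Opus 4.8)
The plan is to split the cycle into two parts: a linear-algebraic equivalence among (ii), (iii) and (iv) concerning the Krylov space, and a bridge (i)$\Leftrightarrow$(ii) obtained by identifying the complex span of the eigenvectors with the complexification of the Krylov space. The structural fact underlying everything is that every non-zero $\bm{u}\in V$ has $f_p$ as its minimal annihilating polynomial with respect to $A$: since $f_p$ is the minimal annihilating polynomial of each generator $\bm{v}_{p,j}$ (as noted in \cref{sec:eigenvect-multiplicity=1}), $f_p$ annihilates all of $V$, and because $f_p$ is irreducible the minimal annihilating polynomial of a non-zero $\bm{u}\in V$ divides $f_p$ and is non-constant, hence equals $f_p$. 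Consequently $\bm{u},A\bm{u},\ldots,A^{d-1}\bm{u}$ are linearly independent, so $\dim_\Q L_A(\bm{u})=d$, and $f_p(A)\bm{u}=\bm{0}$ shows $A^{d}\bm{u}\in L_A(\bm{u})$, whence $L_A(\bm{u})$ is $A$-invariant; the same holds for $\bm{w}$.

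With this in hand the inner triangle is straightforward. Both (ii)$\Rightarrow$(iii) and (ii)$\Rightarrow$(iv) are immediate from $\bm{w}\in L_A(\bm{w})$ and $\bm{u}\in L_A(\bm{u})$. For (iii)$\Rightarrow$(ii): if $\bm{w}\in L_A(\bm{u})$ then $A$-invariance gives $A^{k}\bm{w}\in L_A(\bm{u})$ for all $k$, so $L_A(\bm{w})\subseteq L_A(\bm{u})$, and equality follows because both spaces have dimension $d$. The implication (iv)$\Rightarrow$(ii) is symmetric, so (ii), (iii) and (iv) are mutually equivalent.

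The decisive step is the bridge (i)$\Leftrightarrow$(ii), which I would reduce to the identity
\[
  \Spanc{\psi_p(A,\alpha_i)\bm{u}\mid i=1,\ldots,d}
  =\C\otimes_\Q L_A(\bm{u}),
\]
valid for every non-zero $\bm{u}\in V$. The inclusion $\subseteq$ is clear because $\psi_p(A,\alpha_i)$ is a polynomial in $A$ of degree $d-1$ (\cref{eq:psi-f}), so each $\psi_p(A,\alpha_i)\bm{u}$ is a $\C$-linear combination of $\bm{u},A\bm{u},\ldots,A^{d-1}\bm{u}$. For $\supseteq$ I would argue by dimension: by \cref{lem:eigenvector-single} each $\psi_p(A,\alpha_i)\bm{u}$ is a non-zero eigenvector for the eigenvalue $\alpha_i$, the roots $\alpha_i$ are pairwise distinct since $f_p$ is irreducible over $\Q$ and hence separable, and eigenvectors for distinct eigenvalues are linearly independent; thus the left-hand side is a $d$-dimensional subspace of the $d$-dimensional right-hand side, forcing equality. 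Granting the identity, (i) asserts exactly that the complexified Krylov spaces of $\bm{u}$ and $\bm{w}$ coincide, which is equivalent to (ii) because a $\Q$-subspace is recovered from its complexification as its set of rational vectors.

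I expect the main obstacle to be the reverse inclusion $\supseteq$ in the bridging identity: one must ensure the $d$ eigenvectors genuinely fill out the whole complexified Krylov space and not merely a proper subspace. This rests on their non-vanishing (\cref{lem:eigenvector-single}) together with the distinctness of the roots $\alpha_i$, for which the separability of the irreducible factor $f_p$ over $\Q$ is the essential input. Once this dimension count closes, the remaining equivalences follow routinely.
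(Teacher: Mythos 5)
Your proof is correct, and it reaches the four equivalences by a genuinely different decomposition than the paper. The paper links (i) directly to (iii): it uses the scalar identity $\psi_p(A,\alpha_i E)(A^k\bm{u})=\alpha_i^k\,\psi_p(A,\alpha_i E)\bm{u}$, so that every vector of $L_A(\bm{u})$ is sent by $\psi_p(A,\alpha_i E)$ to a multiple of the single eigenvector $\psi_p(A,\alpha_i E)\bm{u}$, and then repeats the argument with $\bm{w}$ to connect (iii) and (iv), obtaining (ii) last. You instead first settle the triangle (ii)$\Leftrightarrow$(iii)$\Leftrightarrow$(iv) by $A$-invariance plus the equality of dimensions, and then prove the bridging identity $\Spanc{\psi_p(A,\alpha_i)\bm{u}\mid i=1,\ldots,d}=\C\otimes_\Q L_A(\bm{u})$ by a dimension count resting on \cref{lem:eigenvector-single}, the separability of the irreducible factor $f_p$, and the independence of eigenvectors for distinct eigenvalues. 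Your route is longer but makes explicit two points the paper's terse proof leaves implicit: that $f_p$ is the minimal annihilating polynomial of \emph{every} non-zero vector of $V$ (your irreducibility argument), and why equality of the complex spans in (i) descends to equality of the rational Krylov spaces (a $\Q$-subspace is recovered from its complexification). In particular, the direction from (i) to (iii), which the paper's scalar identity alone does not fully justify without some non-vanishing or dimension argument, is airtight in your version. The paper's approach buys brevity and avoids any complexification bookkeeping; yours buys a self-contained and checkable chain of implications.
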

\begin{proof}
  Since $f_p(\lambda)$ is the minimal annihilating
  polynomial of $\bm{u}\in V$, $\bm{u}$, $A\bm{u}$, $A^2\bm{u}$, \ldots,
  $A^{d-1}\bm{u}$ are linearly independent. Furthermore, $A^k\bm{u}$ satisfies
  \begin{equation}
    \label{eq:scalar}
    \psi_p(A,\lambda E)(A^k\bm{u})=A^k(\psi_p(A,\alpha_i E)\bm{u})
    =\alpha_i^k(\psi_p(A,\alpha_i E)\bm{u}),
  \end{equation}
  which shows that \ref{pro:eigenvector1:item1} and
  \ref{pro:eigenvector1:item3} are equivalent.  Next, $\bm{w}$,
  $A\bm{w}$, $A^2\bm{w}$, \ldots, $A^{d-1}\bm{w}$ satisfy
  \[
  \psi_p(A,\alpha_i E)(A^k\bm{w})=A^k(\psi_p(A,\alpha_i E)\bm{w})
  =\alpha_i^k(\psi_p(A,\alpha E)\bm{w}),
  \]
  as in \cref{eq:scalar}. Since $\psi_p(A,\alpha E)\bm{u}$ is equal to
  $\psi_p(A,\alpha E)\bm{w}$ up to a scalar, we see that
  \ref{pro:eigenvector1:item3} and \ref{pro:eigenvector1:item4} are
  equivalent, thus we also have \ref{pro:eigenvector1:item2} is
  equivalent to the others, which completes the proof.
\end{proof}

Since there exist $m_p$ vectors 
$\bm{u}_1,\bm{u}_2,\ldots,\bm{u}_{m_p}\in V$ that satisfy
\[
V=L_A(\bm{u}_1)\oplus L_A(\bm{u}_2)\oplus\cdots\oplus
L_A(\bm{u}_{m_p}),
\]
we have
\begin{equation}
  \label{eq:fpspace}
  F_p=\Spanc{\psi_p(A,\lambda)\bm{u}_k\mid
  \lambda=\alpha_1,\alpha_2,\ldots,\alpha_d,\, k=1,\ldots,m_p}, 
\end{equation}
thus $\psi_p(A,\lambda)\bm{u}_k$ ($k=1,\ldots,m_p$) are the desired
eigenvectors.

Discussions above leads an algorithm for computing
eigenvectors; see Algorithm~\ref{alg:eigenvector-multiple-1}.

\begin{algorithm}
  \caption{Computing eigenvectors in the case that true unit minimal
    annihilating polynomials are given}
  \label{alg:eigenvector-multiple-1}
  \begin{algorithmic}[1]
    \Input{
      $A\in\Q^{n\times n}$; $f_p(\lambda)\in\Q[\lambda]$: an
      eigenfactor of $A$; $J_1\subset J$ satisfying \cref{eq:j1};
      $\{g_{p,j}(\lambda)\mid j\in J_1\}$: a set of cofactors, defined
      as in \cref{eq:gpj};
    }
    \Output{
      $\Phi=\{\varphi_1(\lambda),\ldots,\varphi_{m_p}(\lambda)\}$:
      the eigenvectors of $A$ associated to the root of
      $f_p(\lambda)=0$;  
    }
    \State $\Phi\gets\{\}$; $L\gets\{\}$;
    \For {$j\in J_{1}$} 
    $\bm{v}_j\gets g_{p,j}(A)\bm{e}_j$ with the Horner's rule
      (\cref{eq:fav});
    \EndFor 
    \State Calculate a basis $B$ of $V=\Span{\bm{v}_j\mid j\in
      J_{1}}$;
    \For {$k=1,\ldots,m_p-1$}
    \State Choose $\bm{u}\in B$ satisfying $\bm{u}\not\in L$
    which has the ``simplest'' form;
    \State
    Calculate
    $K_A(\bm{u})=\{\bm{u},A\bm{u},\ldots,A^{d-1}\bm{u}\}$; 
    \State
    $\varphi_k(\lambda)\gets$
    \textsc{CalculateEigenvector}($f_p(\lambda),K_A(\bm{u})$);
    \label{alg:eigenvector-multiple-1:varphik}
    \Comment See \Cref{rem:calculateeigenvector}
    \State Calculate a basis of $L_A(\bm{u})$ from
    $K_A(\bm{u})$ by the column reduction;
    \State $\Phi\gets\Phi\cup\{\varphi_k(\lambda)\}$;
    \State $L\gets L\oplus L_A(\bm{u})$;

    \EndFor
    \State Choose $\bm{u}\in B$ satisfying
    $\bm{u}\not\in L$ 
    which has the ``simplest'' form;
    \Comment Note that this step does not require
    computing $L_A(\bm{u})$, etc.
    \State $\varphi_{m_p}(\lambda)\gets\psi_p(A,\lambda
    E)\bm{u}$;
    \Comment Calculated using the Horner's rule (\cref{eq:fav}) 
    \State$\Phi\gets\Phi\cup\{\varphi_{m_p}(\lambda)\}$;

    \State\Return $\Phi$;
  \end{algorithmic}
\end{algorithm}

\begin{remark}
  \label{rem:calculateeigenvector}
  In Line
  \ref{alg:eigenvector-multiple-1:varphik} in
  \Cref{alg:eigenvector-multiple-1}, eigenvectors are computed using
  the Krylov vectors calculated in the preceding lines, as shown in
  Procedure \textsc{CalculateEigenvector} below.

  \noindent{\hrulefill}
  \begin{algorithmic}[1]
    \Input{
      $f_p(\lambda)=\lambda^{d}+a_{p,d-1}\lambda^{d-1}
      +\cdots+a_{p,0}\in\Q[\lambda]$: an eigenfactor of $A$ with $a_{p,d}=1$;
      $K_A$: a list of $d$ vectors of dimension $n$;
    }
    \Output{$\varphi(\lambda)=\psi_p(A,\lambda E)\bm{u}$: an
      eigenvector of $A$ associated to the root of
      $f_p(\lambda)=0$;}
    \Procedure{CalcuateEigenvector}{$f_p(\lambda)$, $K_A$}
    \For {$j=1,\ldots,d$} $\bm{c}_{d-j}\gets
    \sum_{k=0}^{j-1} a_{p,d-k}K_A[j-k]$;
    \Comment $K_A[i]$ denotes the $i$-th element in $K_A$
    \EndFor
    \State \Return $\lambda^{d-1}\bm{c}_{d-1}+
    \lambda^{d-2}\bm{c}_{d-2}\cdots+\lambda\bm{c}_1+\bm{c}_0 $;
    \EndProcedure
  \end{algorithmic}
  \hrulefill
\end{remark}

\begin{remark}
  In several lines in \Cref{alg:eigenvector-multiple-1}, 
  we take vectors of ``the simplest form'' from the certain set of
  vectors. ``The simplest form'' may be different
  according to different criteria, such as bit-length of the
  components, or the number of zero components in the calculated vectors.
\end{remark}

\section{Main results}
\label{sec:eigenvector-upap}

\Cref{alg:eigenvector-multiple-1} uses the minimum annihilating
polynomials effectively for computing eigenvectors. However, direct
use of the minimum annihilating polynomials often leads to relatively
high computational complexity.

In this section, the unit \emph{pseudo} annihilating polynomials
(\cite{taj-oha-ter2018}) are utilized for efficient computation of
eigenvectors. Pseudo annihilating polynomials are suitable for
computing eigenvectors in place of the minimal annihilating
polynomials because they coincide with high possibility. In addition,
computation of pseudo annihilating polynomials is more efficient than
that of the minimal annihilating polynomials.


First, we recall the notion of unit pseudo annihilating
polynomials from our previous paper (\cite{taj-oha-ter2018}).
Let $\bm{r}$ be a non-zero row vector over $\Z$ whose
components are randomly given. Let
\begin{equation}
  \label{eq:wpk}
  \begin{split}
    \bm{r}_p^{(0)} &=
    (r_{p,1}^{(0)},r_{p,2}^{(0)},\ldots,r_{p,n}^{(0)})=\bm{r}G_p,  
    \\
    \bm{r}_p^{(k)} &=
    (r_{p,1}^{(k)},r_{p,2}^{(k)},\ldots,r_{p,n}^{(k)})=\bm{r}G_p{F_p}^k
    \quad
    \textrm{for $k>0$,}
  \end{split}
\end{equation}
where $G_p=g_{p,j}(A)$ and $F_p=f_p(A)$.
Furthermore, for $j=1,\ldots,n$, define
\begin{equation}
  \label{eq:rhopj}
  \l'_{p,j}=
  \begin{cases}
    0 & \text{if $r_{p,j}^{(0)}=0$,}  \\
    k & \text{if $r_{p,j}^{(k-1)}\ne 0$ and $r_{p,j}^{(k)}=0$.}
  \end{cases}
\end{equation}
Consider the
polynomial $\pi'_{A,j}(\lambda)$ defined by 
\[
\pi'_{A,j}(\lambda)=f_1^{\l'_{1,j}}(\lambda)f_2^{\l'_{2,j}}(\lambda)\cdots
f_q^{\l'_{q,j}}(\lambda).
\]
We call $\pi'_{A,j}(\lambda)$ a $j$-th \emph{unit pseudo
annihilating polynomial} of $A$. Notice that $\pi'_{A,j}(\lambda)$
divides $\pi_{A,j}(\lambda)$.
Therefore, $\pi'_{A,j}(\lambda)=\pi_{A,j}(\lambda)$ if and only if
$\pi'_{A,j}(\lambda)\bm{e}_j=\pi_{A,j}(\lambda)\bm{e}_j$.
In the previous paper (\cite{taj-oha-ter2018}), an effective method for
computing $\pi'_{A,j}(\lambda)$ for $j\in J$ is given.

For $j\in J$, let 
\begin{equation}
  \label{eq:gpj'}
  g'_{p,j}(\lambda)
  =f_1(\lambda)^{l'_{1,j}}\cdots f_{p-1}(\lambda)^{l'_{p-1,j}}
  f_{p+1}(\lambda)^{l'_{p+1,j}}\cdots f_q(\lambda)^{l'_{q,j}},
\end{equation}
and
\begin{equation}
  \label{eq:j'1}
  J'_{1}=\{j\in J\mid l'_{p,j}=1\}.
\end{equation}



Next, we consider for $j\in J'_1$, two vectors $\bm{v}'_j$ and
$\varphi'(\lambda)$ defined to be $\bm{v}'_j=g'_{p,j}(A)\bm{e}_j$ and
$\varphi'(\lambda)=\psi_p(A,\lambda E)\bm{v}'_j$, respectively. Since
$f_p(A)=(A-\lambda E)\psi_p(A,\lambda E)$, we have
\begin{align*}
  \pi_{A,j}'(A)\bm{e}_j &= f_p(A)g_{p,j}'(A)\bm{e}_j \nonumber\\
  &= f_p(A)\bm{v}'_{p,j} \nonumber\\
  &= (A-\lambda E)\psi_p(A,\lambda E)\bm{v}'_{p,j}.
\end{align*}
Therefore,  $\pi'_{A,j}(\lambda)=\pi_{A,j}(\lambda)$ if and only if
$\varphi'(\lambda)=\psi_p(A,\lambda E)\bm{v}'$ is a true eigenvector
associated to the eigenfactor $f_p(\lambda)$. Furthermore, the
formula above shows that the calculation of $\varphi'(\lambda)$ is
contained in the calculation by the Horner's rule of
$\pi'_{A,j}(\lambda)\bm{e}_j$. More precisely, as we have mentioned in
\Cref{sec:prem}, $f_p(A)\bm{v}'_{j}=\pi'_{A,j}(A)\bm{e}_j$ is obtained
from $\varphi'(\lambda)=\psi_p(A,\lambda E)\bm{v}'_{j}$ by just one
last step of the Horner's rule:
\begin{equation}
  \label{eq:puap-test}
  A\bm{c}_0+a_0\bm{v}',
\end{equation}
where
$\varphi'(\lambda)=\lambda^{d-1}\bm{c}_{d-1}+\lambda^{d-2}\bm{c}_{d-2}+\cdots+\lambda\bm{c}_1+\bm{c}_0$.

Now, recall a method for computing the minimal annihilating
polynomials $\pi_{A,j}(\lambda)$, $j\in J$ proposed in
\cite{taj-oha-ter2018}. The method consists of mainly three
steps:
\begin{enumerate}[leftmargin=*, label={Step \arabic{enumi}.}]
\item Compute unit pseudo annihilating polynomials
  $\pi'_{A,j}(\lambda)$ for $j\in J$.
\item Compute $\pi'_{A,j}(\lambda)\bm{e}_j$ for $j\in J$ by
  the Horner's rule.
\item If $\pi_{A,j}'(\lambda)\bm{e}_j\ne 0$ for some $j$,
  then construct the minimal annihilating polynomial
  $\pi_{A,j}(\lambda)$ by computing the minimal annihilating
  polynomial of the vector $\pi'_{A,j}(\lambda)\bm{e}_j$.
\end{enumerate}

The discussion given in the present section shows that Step 2
involves the computation of a lot of eigenvectors. However, all the
information on eigenvalues are discarded by the direct use of the
Horner's rule. We conclude, in this regard, that Algorithm 1 which
utilizes the true unit minimal annihilating polynomials
$\pi_{A,j}(\lambda)$ for $j\in J_1$ has redundancy.


Now we are ready to design an efficient method for computing
eigenvectors associated to the eigenfactor $f_p(\lambda)$. Let
$l'_p=\max_{j\in J}\{l'_{p,j}\}$. Assume hereafter that $l'_p=1$ and
set
\[
J'_1=\{j\in J\mid l'_{p,j}=1\},\quad
J'_0=\{j\in J\mid l'_{p,j}=0\}.
\]
Let 
\[
G'=\{\bm{v}'_j=g'_{p,j}(\lambda)\bm{e}_j\mid j\in J'_1\},\quad
V'=\Span{\bm{v}'_j\mid j\in J'_1},
\]
$B'$ denotes a basis of the vector space $V'$.
We present two different algorithms. \Cref{alg:eigenvector-multiple-2}
uses the set $G'$ and \Cref{alg:eigenvector-multiple-3} uses the set
$B'$. \Cref{alg:eigenvector-multiple-2} is designed to compute
eigenvectors in an efficient manner. In contrast,
\Cref{alg:eigenvector-multiple-3} is designed with an intention of
obtaining simpler expression of eigenvectors.

\begin{algorithm}
  \caption{Computing eigenvectors with unit pseudo annihilating
    polynomials (for quick computation of eigenvectors)} 
  \label{alg:eigenvector-multiple-2}
  \begin{algorithmic}[1]
    \Input{
      $A\in\Q^{n\times n}$; $f_p(\lambda)\in\Q[\lambda]$: an
      eigenfactor of $A$; $J'_1\subset J$ satisfying \cref{eq:j'1};
      $\{g'_{p,j}(\lambda)\mid j\in J'_1\}$: a set of cofactors,
      defined as in \cref{eq:gpj'};
    }
    \Output{
      $\Phi=\{\varphi_1(\lambda),\ldots,\varphi_{m_p}(\lambda)\}$:
      the eigenvectors of $A$ associated to the root of
      $f_p(\lambda)=0$;
    }
    \State $\Phi\gets\{\}$; $G'\gets\{\}$; $L\gets\{\}$;
    \For {$j\in J'_{1}$} \label{alg:eigenvector-multiple-2:loop-jl}
    \label{alg:eigenvector-multiple-1:seeds-begin} 
    \State $\bm{v}'_j\gets g'_{p,j}(A)\bm{e}_j$ with the Horner's rule
    (\cref{eq:fav});
    \State $G'\gets G'\cup\{\bm{v}'_j\}$;
    \EndFor \label{alg:eigenvector-multiple-1:seeds-end}
    \For {$k=1,\ldots,m_p-1$}
    \State Choose $\bm{u}'\in G'$ satisfying $\bm{u}'\not\in L$
    which has the ``simplest'' form;
    \label{alg:eigenvector-multiple-2:setu'k}
    \State $G'\gets G'\setminus\{\bm{u}'\}$;
    \State Calculate
    $K_A(\bm{u}')=\{\bm{u}',A\bm{u}',\ldots,A^{d-1}\bm{u}'\}$;
    \State $\varphi_k(\lambda)\gets$
    \textsc{CalculateEigenvector}($f_p(\lambda),K_A(\bm{u}')$);
    \label{alg:eigenvector-multiple-2:eigenvector-k}
    \Comment See \Cref{rem:calculateeigenvector}
    \If{$f_p(A)\bm{u}'=\bm{0}$}  \label{alg:eigenvector-multiple-2:verify-k}
    \Comment Calculated as in \cref{eq:puap-test}
    \State Calculate a basis of $L_A(\bm{u}')$ from
    $K_A(\bm{u}')$ by the column reduction;
    \State $\Phi\gets\Phi\cup\{\varphi_k(\lambda)\}$;
    \State $L\gets L\oplus L_A(\bm{u}')$;
    \Else\mbox{} go to
      Line~\ref{alg:eigenvector-multiple-2:setu'k};
    \EndIf
    \EndFor
    \State Choose $\bm{u}'\in G'$ satisfying
    $\bm{u}'\not\in L$
    which has the ``simplest'' form;
    \label{alg:eigenvector-multiple-2:setu'mp}
    \Comment Note that this step does not require
    calculating $L_A(\bm{u}')$, etc.
    \State $G'\gets G'\setminus\{\bm{u}'\}$;
    \State $\varphi_{m_p}(\lambda)\gets\psi_p(A,\lambda
    E)\bm{u}'$; \label{alg:eigenvector-multiple-2:eigenvector-mp}
    \Comment Calculated using the Horner's rule (\cref{eq:fav}) 
    \If{$f_p(A)\bm{u}'=\bm{0}$}
    $\Phi\gets\Phi\cup\{\varphi_{m_p}(\lambda)\}$;
    \label{alg:eigenvector-multiple-2:verify-mp}
    \Comment Calculated as in \cref{eq:puap-test}
    \Else\mbox{}  go to
      Line~\ref{alg:eigenvector-multiple-2:setu'mp};
    \EndIf
    \State\Return $\Phi$;
  \end{algorithmic}
\end{algorithm}


\begin{algorithm}
  \caption{Computing eigenvectors with unit pseudo annihilating polynomials}
  \label{alg:eigenvector-multiple-3}
  \begin{algorithmic}[1]
    \Input{
      $A\in\Q^{n\times n}$; $f_p(\lambda)\in\Q[\lambda]$: an
      eigenfactor of $A$; $J'_1\subset J$ satisfying \cref{eq:j'1};
      $\{g'_{p,j}(\lambda)\mid j\in J'_1\}$: a set of cofactors,
      defined as in \cref{eq:gpj'};
    }
    \Output{
      $\Phi=\{\varphi_1(\lambda),\ldots,\varphi_{m_p}(\lambda)\}$:
      the eigenvectors of $A$ associated to the root of
      $f_p(\lambda)=0$;
    }
    \State $\Phi\gets\{\}$; $L\gets\{\}$;
    \For {$j\in J'_{1}$} \label{alg:eigenvector-multiple-3:loop-jl}
    \State $\bm{v}'_j\gets g'_{p,j}(A)\bm{e}_j$ with the Horner's rule
    (\cref{eq:fav});
    \State $c_j\gets (\mbox{a random integer})$;
    \EndFor
    \State $\bm{v}'\gets\sum_{j\in J'_1}c_j\bm{v}'_j$;
    \If {$f_p(A)\bm{v}'\ne\bm{0}$}
    \label{alg:eigenvector-multiple-3:verify-0}
    exit with an error message: ``One or more pseudo
    annihilating polynomial(s) are wrong''; 
    \label{alg:eigenvector-multiple-3:error-1}
    \EndIf
    \State Calculate $B'$ as a basis of $V'=\Span{\bm{v}'_j\mid j\in
      J'_{1}}$; 
    \label{alg:eigenvector-multiple-3:B'}
    \For {$k=1,\ldots,m_p-1$}
    \State Choose $\bm{u}'\in B'$ satisfying $\bm{u}'\not\in L$
    which has the ``simplest'' form;
    \label{alg:eigenvector-multiple-3:setu'k}
    \State $B'\gets B'\setminus\{\bm{u}'\}$;
    \State Calculate
    $K_A(\bm{u}')=\{\bm{u}',A\bm{u}',\ldots,A^{d-1}\bm{u}'\}$;
    \State $\varphi_k(\lambda)\gets$
    \textsc{CalculateEigenvector}($f_p(\lambda),K_A(\bm{u}')$);
    \label{alg:eigenvector-multiple-3:eigenvector-k}
    \Comment See \Cref{rem:calculateeigenvector}
    \If{$f_p(A)\bm{u}'=\bm{0}$}  \label{alg:eigenvector-multiple-3:verify-k}
    \Comment Calculated as in \cref{eq:puap-test}
    \State Calculate a basis of $L_A(\bm{u}')$ from
    $K_A(\bm{u}')$ by the column reduction;
    \State $\Phi\gets\Phi\cup\{\varphi_k(\lambda)\}$;
    \State $L\gets L\oplus L_A(\bm{u}')$;
    \Else \mbox{}
    exit with an error message: ``One or more pseudo
    annihilating polynomial(s) are wrong''; 
    \EndIf
    \EndFor
    \algstore{alg:eigenvector-multiple-3}
  \end{algorithmic}
\end{algorithm}
\begin{algorithm}
  \ContinuedFloat
  \caption{Computing eigenvectors with unit pseudo annihilating
    polynomials (Continued)}
  \begin{algorithmic}
    \algrestore{alg:eigenvector-multiple-3}
    \State Choose $\bm{u}'\in B'$ satisfying
    $\bm{u}'\not\in L$
    which has the ``simplest'' form;
    \label{alg:eigenvector-multiple-3:setu'mp}
    \Comment Note that this step does not require
    calculating $L_A(\bm{u}')$, etc.
    \State $B'\gets B'\setminus\{\bm{u}'\}$;
    \State $\varphi_{m_p}(\lambda)\gets\psi_p(A,\lambda
    E)\bm{u}'$; \label{alg:eigenvector-multiple-3:eigenvector-mp}
    \Comment Calculated using the Horner's rule (\cref{eq:fav}) 
    \If{$f_p(A)\bm{u}'=\bm{0}$}
    $\Phi\gets\Phi\cup\{\varphi_{m_p}(\lambda)\}$;
    \label{alg:eigenvector-multiple-3:verify-mp}
    \Comment Calculated as in \cref{eq:puap-test}
    \Else \mbox{}
    exit with an error message: ``One or more pseudo
    annihilating polynomial(s) are wrong''; 
    \EndIf
    \State\Return $\Phi$;
  \end{algorithmic}
\end{algorithm}

\begin{remark}
  In \Cref{alg:eigenvector-multiple-2},
  $\bm{v}'_j=g'_{p,j}(A)\bm{e}_j$ are directly used for efficient
  construction of candidates of eigenvectors. Furthermore, in the case
  that $\varphi'(\lambda)=\psi_p(A,\lambda E)\bm{v}'_j$ is not a true
  eigenvector, another candidate is computed immediately just by
  picking up $\bm{v}'_{j'}\in G'$ with $j'\ne j$. We continue to pick
  up new $\bm{v}'_j$ until $m_p$ eigenvectors are computed.
\end{remark}

\begin{remark}
  In \Cref{alg:eigenvector-multiple-3}, if there is a vector
  $\bm{u}'\in B'$ which does not satisfy the condition
  $f_p(A)\bm{u}'=\bm{0}$, there may exist many such vectors, because
  $B'$ is calculated from $G'$ by column reduction.  Therefore, in the
  case when such vector is detected, we recalculate pseudo annihilating
  polynomials of $A$ and start over computation of
  \Cref{alg:eigenvector-multiple-3}.
\end{remark}

\begin{remark}
  In both algorithms, it is sufficient to verify
  $f_p(A)\bm{u}'_k=\bm{0}$ only for vectors 
  $\bm{u}'_{1},\ldots,\bm{u}'_{m_p}$ in the basis
  $V'=\Span{\bm{v}'_j\mid j\in J'_1}$.
  This reduces the cost of computation considerably.
\end{remark}




\section{Experiments}
\label{sec:exp}

We have implemented
\Cref{alg:eigenvector-multiple-2,alg:eigenvector-multiple-3} on 
a computer algebra system Risa/Asir (\cite{nor2003})
and evaluated them. First, for the case of $m_p=1$, we have
computed eigenvectors with changing $\dim(A)$ and
$\deg(\pi'_{A,j}(\lambda))$. Second, we have computed
eigenvectors for the case $m_p=2,3,4$  with focusing attention on calculation
and reduction of ``seeds'' of eigenvectors. 
Finally, we have compared performance of our algorithms with
an algorithm implemented on Maple (\cite{maple2016}). 

The tests were carried out on the following environment: Intel Xeon
E5-2690 at 2.90 GHz, RAM 128GB, Linux 2.6.32 (SMP).

\subsection{Computing eigenvectors with $m_p=1$}
\label{sec:exp-1}


In this experiment, test matrices are given as follows. Let
$f_1(x),\ldots,f_8(x)$ be monic and pairwise relatively prime
polynomials of the same degree.  For
$\bar{A}=\diag(C(f_1),C(f_2),\ldots,C(f_8))$,
where $C(f)$ denotes the companion matrix of $f$, we have
calculated dense test matrix $A$
by applying similarity transformations. 



\Cref{tab:test-1-max,tab:test-1-min} show the results with changing
dimension of the matrix. In the amount of memory usage, ``$a$e$b$''
denotes $a\times10^{b}$ (bytes). In \Cref{tab:test-1-max},
eigenvectors are computed with pseudo annihilating polynomials of
degree $\deg(\pi_{A,j}(\lambda))=\dim(A)$. On the other hand, in
\Cref{tab:test-1-min}, eigenvectors are computed with pseudo
annihilating polynomials of degree
$\deg(\pi_{A,j}(\lambda))=\dim(A)/4$. 


\begin{table}[p]
  \centering
  \caption{Computing time and memory usage for the case of
    $\deg(\pi_{A,j}(\lambda))=\dim(A)$.
    See~\Cref{sec:exp-1} for details.}
  \begin{tabular}{r|r|r|r}
    \hline
    $\dim(A)$ & $\deg(\pi_{A,j})$ & Time (sec.) & Memory usage (bytes) \\
    \hline
    128 & 128 & 0.205 & 2.37e8 \\
    256 & 256 & 2.037 & 2.11e9 \\
    384 & 384 & 8.971 & 8.76e9 \\
    512 & 512 & 29.57 & 2.61e10 \\
    640 & 640 & 50.48 & 4.37e10 \\
    768 & 768 & 105.58 & 9.22e10 \\
    896 & 896 & 164.75 & 1.50e11 \\
    1024 & 1024 & 289.72 & 2.57e11 \\
    \hline
  \end{tabular}
  \label{tab:test-1-max}
\end{table}
\begin{table}[p]
  \centering
  \caption{Computing time and memory usage for the case of
    $\deg(\pi_{A,j}(\lambda))=\dim(A)/4$.
    See~\Cref{sec:exp-1} for details.}
  \begin{tabular}{r|r|r|r}
    \hline
    $\dim(A)$ & $\deg(\pi_{A,j})$ & Time (sec.) & Memory usage (bytes) \\
    \hline
    128 & 32 & 0.033 & 4.50e7 \\
    256 & 64 & 0.322 & 3.78e8 \\
    384 & 96 & 1.379 & 1.47e9 \\
    512 & 128 & 3.598 & 3.55e9 \\
    640 & 160 & 5.471 & 5.54e9 \\
    768 & 192 & 12.88 & 1.13e10 \\
    896 & 224 & 21.19 & 1.78e10 \\
    1024 & 256 & 34.98 & 3.01e10 \\
    \hline
  \end{tabular}
  \label{tab:test-1-min}
\end{table}


\Cref{tab:test-2-min,tab:test-2-max}
show the results using $\pi_{A,j}(\lambda)$ of different degrees for
the same matrix.

From both experiments, we see that eigenvectors are computed more efficiently by using
pseudo annihilating polynomials of smaller degrees.


\begin{table}[p]
  \caption{Computing time and memory usage for the case of
    $\dim(A)=128$ with increasing the degree of the minimal
    annihilating polynomial. See \Cref{sec:exp-1} for details.}
  \centering
  \begin{tabular}{r|r|r}
    \hline
    $\deg(\pi_{A,j})$ & Time (sec.) & Memory usage (bytes) \\
    \hline
    32 & 0.033 & 4.50e7 \\
    48 & 0.054 & 7.47e7 \\
    64 & 0.085 & 1.05e8 \\
    80 & 0.109 & 1.37e8 \\
    96 & 0.144 & 1.73e8 \\
    112 & 0.170 & 2.04e8 \\
    128 & 0.204 & 2.37e8 \\
    \hline
  \end{tabular}
  \label{tab:test-2-min}
\end{table}
\begin{table}[p]
  \caption{Computing time and memory usage for the case of
    $\dim(A)=1024$ with increasing the degree of the minimal
    annihilating polynomial. See \Cref{sec:exp-1} for details.}
  \centering
  \begin{tabular}{r|r|r}
    \hline
    $\deg(\pi_{A,j})$ & Time (sec.) & Memory usage (bytes) \\
    \hline
    256 & 34.98 & 3.01e10 \\
    384 & 61.80 & 5.29e10 \\
    512 & 95.06 & 7.98e10 \\
    640 & 135.71 & 1.16e11 \\
    768 & 172.33 & 1.54e11 \\
    896 & 222.82 & 2.02e11 \\
    1024 & 289.72 & 2.57e11 \\
    \hline
  \end{tabular}
  \label{tab:test-2-max}
\end{table}



\subsection{Computing eigenvectors with $m_p>1$}
\label{sec:exp-2}

In this experiment, test matrices are given in the same way as
above. In test matrices, the number of $\pi_{A,j}(\lambda)$ with
$l_{p,j}=1$ is approximately equal to $\dim(A)/4$. Among them,
approximately half of them have degree $\dim(A)/4$, the other half
have degree $\dim(A)$. For each cases, the same test matrices are used.

\Cref{tab:test-multiple-1} shows the results for
\Cref{alg:eigenvector-multiple-2}. ``Time ($G'$)'' denotes time for
  computing $G'$, the set of ``seeds'' of eigenvectors (lines
  \ref{alg:eigenvector-multiple-1:seeds-begin}--\ref{alg:eigenvector-multiple-1:seeds-end}). ``\#$G'$'' 
  denotes the number of elements in $G'$.
  Although computing time of $G'$ is long for large $A$, it can be
  reduced by the use of parallel processing
  (e.g.\ \cite{mae-nor-oha-tak-tam2001}) since all the
  vectors in $G'$ can be calculated independently with the Horner's rule.

\Cref{tab:test-multiple-2} shows the results for
\Cref{alg:eigenvector-multiple-3}. ``Time ($B'$)'' denotes computing
time of $B'$ (line
\ref{alg:eigenvector-multiple-3:B'}) from construction of $G'$.

In \Cref{tab:test-multiple-3}, ``$\max\{\|\varphi_2(\lambda)\|_2\}$'' and
``$\max\{\|\varphi_3(\lambda)\|_2\}$'' denote the maximum values of the 2-norms
of eigenvectors computed by
\Cref{alg:eigenvector-multiple-2,alg:eigenvector-multiple-3},
respectively. Notice that, in \Cref{alg:eigenvector-multiple-3}, the
norm of computed eigenvectors has remarkably decreased.


\begin{table}[p]
  \caption{Computing time and memory usage of
    \Cref{alg:eigenvector-multiple-2} for the case of $m_p>1$. See 
    \Cref{sec:exp-2} for details.}
  \centering
  \begin{tabular}{r|r|r|r|r|r|r}
    \hline
    $\dim(A)$ & $\deg(f_p)$ & $m_p$ & Time (sec.) & Memory usage &
    Time ($G'$) & \#$G'$ \\
    \hline
    128 & 4 & 2 & 4.072 & 4.61e9 & 4.008 & 28\\
    128 & 4 & 3 & 3.992 & 4.44e9 & 3.860 & 32\\
    128 & 4 & 4 & 4.304 & 4.48e9 & 3.780 & 32\\
    256 & 8 & 2 & 77.21 & 8.14e10 & 75.60 & 64\\
    256 & 8 & 3 & 71.12 & 7.00e10 & 65.33 & 64\\
    256 & 8 & 4 & 89.37 & 7.86e10 & 76.52 & 64\\
    512 & 16 & 2 & 1819.1 & 1.79e12 & 1797.61 & 128\\
    512 & 16 & 3 & 1319.6 & 1.21e12 & 1243.96 & 128\\
    512 & 16 & 4 & 2302.3 & 1.75e12 & 1780.29 & 128\\
    \hline
  \end{tabular}
  \label{tab:test-multiple-1}
\end{table}

\begin{table}[p]
  \caption{Computing time and memory usage of
    \Cref{alg:eigenvector-multiple-3} for the case of $m_p>1$. See 
    \Cref{sec:exp-2} for details.}
  \centering
  \begin{tabular}{r|r|r|r|r|r}
    \hline
    $\dim(A)$ & $\deg(f_p)$ & $m_p$ & Time (sec.) &
    Memory usage & Time ($B'$) \\
    \hline
    128 & 4 & 2 & 4.156 & 4.74e9 & 0.104 \\
    128 & 4 & 3 & 4.232 & 4.61e9 & 0.204 \\
    128 & 4 & 4 & 4.192 & 4.45e9 & 0.100 \\
    256 & 8 & 2 & 77.81 & 8.12e10 & 0.640 \\
    256 & 8 & 3 & 69.14 & 6.98e10 & 1.308 \\
    256 & 8 & 4 & 85.86 & 7.86e10 & 2.608 \\
    512 & 16 & 2 & 1840.7 & 1.79e12 & 9.376 \\
    512 & 16 & 3 & 1355.7 & 1.21e12 & 27.34 \\
    512 & 16 & 4 & 2253.5 & 1.76e12 & 112.94 \\
    \hline
  \end{tabular}
  \label{tab:test-multiple-2}
\end{table}

\begin{table}[p]
  \caption{The maximum value of 2-norms of eigenvectors computed by
    \Cref{alg:eigenvector-multiple-2} ($\varphi_2(\lambda)$) and
    \Cref{alg:eigenvector-multiple-3} ($\varphi_3(\lambda)$). See
    \Cref{sec:exp-2} for details.} 
  \centering
  \begin{tabular}{r|r|r|r|r}
    \hline
    $\dim(A)$ & $\deg(f_p)$ & $m_p$ & 
    $\max\{\|\varphi_2(\lambda)\|_2\}$ & $\max\{\|\varphi_3(\lambda)\|_2\}$ \\
    \hline
    128 & 4 & 2 & 4.03e24 & 2.38e3\\
    128 & 4 & 3 & 6.64e8 & 1.03e3\\
    128 & 4 & 4 & 2.78e10 & 1.34e2\\
    256 & 8 & 2 & 4.75e15 & 8.76e1\\
    256 & 8 & 3 & 6.90e15 & 5.66e1\\
    256 & 8 & 4 & 2.05e29 & 1.01e2\\
    512 & 16 & 2 & 1.18e27 & 1.17e2\\
    512 & 16 & 3 & 1.40e26 & 1.06e2\\
    512 & 16 & 4 & 6.61e49 & 1.01e2\\
    \hline
  \end{tabular}
  \label{tab:test-multiple-3}
\end{table}

\subsection{Comparison of performance with Maple}
\label{sec:exp-3}

In this experiment, Test matrices are given as $A=(a_{ij})$ with integers
$a_{ij}$ satisfying $|a_{ij}|<10$ and $\dim(A)=8s$ with
$s=1,2,\dots,7$.  We have executed
``\texttt{LinearAlgebra:-Eigenvectors}'' function with
``\texttt{implicit=true}'' option for expressing eigenvalues as the
characteristic polynomial.  In each degree, we have measured computing
time and memory usage for computing eigenvectors of the same matrix
for $5$ times and have taken the average.

Table~\ref{tab:test-4} shows the results with computing time
in seconds and memory usage in bytes. Furthermore,
since Maple calculates the characteristic polynomial of the matrix,
we have measured computing time for calculating characteristic
polynomial $\chi_A(\lambda)$ of the given matrices independently,
which is shown in the rightmost column in the table. We see that, in
each dimension of $A$, computing time for the characteristic
polynomial accounts only a small portion of computing time for
eigenvectors. This result demonstrates efficiency of our method.

\begin{table}
  \caption{Computing time and memory usage by Maple. See
    \Cref{sec:exp-3} for details.}
  \centering
  \begin{tabular}{r|r|r|r}
    \hline
    $\dim(A)$ & Time (sec.) & Memory usage & Time for $\chi_A(\lambda)$ \\
    \hline
    8 & 0.24 & 8.40e6 & 4.8e$-3$\\
    16 & 9.40 & 7.68e7 & 5.8e$-3$\\
    24 & 146.80 & 1.26e8 & 7.2e$-3$\\
    32 & 2128.74 & 3.14e8 & 7.4e$-3$\\
    40 & 21584.16 & 2.08e9 & 1.6e$-3$\\
    48 & 41478.60 & 1.64e11 & 1.28e$-2$\\
    56 & 159304.81 & 2.89e11 & 3.12e$-2$\\
    \hline
  \end{tabular}
  \label{tab:test-4}
\end{table}

\section{Concluding remarks}
\label{sec:concl}

In this paper, we have proposed efficient algorithms for computing
eigenvector of matrices of integers under the assumption that the
geometric multiplicity of the eigenvalue is equal to the algebraic
multiplicity. The resulting algorithms utilize pseudo unit
annihilating polynomials, the Horner's rule for matrix polynomial
with vectors and Krylov vector spaces in an efficient manner.


The results of experiments show high performance of the resulting
algorithms. 

Based on the concept of (pseudo) annihilating polynomials, the first
and the second authors of the present paper studied a method for
computing generalized eigenvectors and reported basic ideas
(\cite{oha-taj2014}, \cite{oha-taj2015}, \cite{taj2013}). Algorithms
for computing generalized eigenvectors will be described in
forthcoming papers.

\def\cprime{$'$}



\end{document}